\newtheorem{thm}{Theorem}
\newtheorem{corr}[thm]{Corollary}
\newtheorem{lem}[thm]{Lemma}
\newtheorem{prop}[thm]{Proposition}
\theoremstyle{definition}
\theoremstyle{remark}
\newtheorem*{ack}{Acknowledgment}
\def\R{\mathbb R}
\def\f{\frac}
\def\td{\tilde}
\def\ra{\rightarrow}
\def\pt{\partial}
\begin{document}
\title[$f$-minimal surface and manifold with positive $Ric_f^m$]{$f$-minimal surface and manifold with positive $m$-Bakry-\'{E}mery Ricci curvature}
\author{Haizhong Li}
\address{Department of mathematical sciences, and Mathematical Sciences
Center, Tsinghua University, 100084, Beijing, P. R. China}
\email{hli@math.tsinghua.edu.cn}
\author{Yong Wei}
\address{Department of mathematical sciences, Tsinghua University, 100084, Beijing, P. R. China}
\email{wei-y09@mails.tsinghua.edu.cn}
\thanks{The research of the first author was supported by NSFC No. 11271214.}
\subjclass[2010]{{53C42}, {53C21}}
\keywords{$f$-mean curvature, $f$-minimal, $m$-Bakry-\'{E}mery Ricci curvature, eigenvalue estimate}

\maketitle

\begin{abstract}
In this paper, we first prove a compactness theorem for the space of closed embedded $f$-minimal surfaces of fixed topology in a closed three-manifold with positive Bakry-\'{E}mery Ricci curvature. Then we give a Lichnerowicz type lower bound of the first eigenvalue of the $f$-Laplacian on compact manifold with positive $m$-Bakry-\'{E}mery Ricci curvature, and prove that the lower bound is achieved only if the manifold is isometric to the $n$-shpere, or the $n$-dimensional hemisphere. Finally, for compact manifold with positive $m$-Bakry-\'{E}mery Ricci curvature and $f$-mean convex boundary, we prove an upper bound for the distance function to the boundary, and the upper bound is achieved if only if the manifold is isometric to an Euclidean ball.
\end{abstract}

\section{Introduction}

Let $(N^n,g)$ be a smooth Riemannian manifold  and $f$ be a smooth function on $N$. We denote $\bar{\nabla},\bar{\Delta}$ and $\bar{\nabla}^2$ the gradient, Laplacian and Hessian operator on $N$ with respect to $g$, respectively. In this paper by the Bakry-\'{E}mery Ricci curvature we mean
\begin{align}
    Ric_f=Ric+\bar{\nabla}^2f,
\end{align}
which is also called $\infty$-Bakry-\'{E}mery Ricci curvature, i.e., the $m=\infty$ case of the following $m$-Bakry-\'{E}mery Ricci curvature \cite{BM} defined by
\begin{align}\label{m-bakry}
    Ric_f^m=Ric_f-\f 1{m-n}\bar{\nabla} f\otimes\bar{\nabla} f,\qquad (m> n).
\end{align}
When $m=n$, we let $f$ be constant and define $Ric_f^m=Ric$. The equation $Ric_f=\kappa g$ for some constant $\kappa$ is just the gradient Ricci soliton equation, which palys an important role in the study of Ricci flow (see \cite{Cao}). The equation $Ric_f^m=\kappa g$ corresponds to the quasi-Einstein equation (cf.\cite{CSW}), which has been studied by many authors. Denote $dv$ the Riemannian volume form on $N$ with respect to $g$, then $(N^n,g,e^{-f}dv)$ is often called a smooth metric measure space. We refer the interested readers to \cite{Wei-Wylie} for further motivation and examples of the metric measure spaces.

Let $M$ be a hypersurface in $N$ and $\nu$ the outer unit normal vector to $M$. Define the second fundamental form of $M\subset N$ by $h(X,Y)=\langle \bar{\nabla}_X\nu,Y\rangle$ for any two tangent vector fields $X$ and $Y$ on $M$,  and the mean curvature by $H=tr(h)$.  The $f$-mean curvature (see \cite[page 398]{Wei-Wylie}) at a point $x\in M$ with respect to $\nu$ is given by
 \begin{align}
    H^f(x)=H(x)-\langle\bar{\nabla}f(x),\nu(x)\rangle.
 \end{align}
$M$ is called a $f$-minimal hypersurface in $N$ if its $f$-mean curvature $H^f$ vanishes everywhere.

The most well known example of metric measure space is the Gaussian soliton: $(\R^n,g_0,e^{-\f 14|x|^2}dv)$, where $g_0$ is the standard Euclidean metric on $\R^n$. The Gaussian soliton satisfies $Ric_f=\f 12g_0$. The $f$-minimal hypersurface in the Gaussian soliton is the self-shrinker $M^{n-1}\subset\R^n$ which satisfies:
\begin{align*}
    H=\f 12\langle x,\nu\rangle.
\end{align*}
Self-shrinkers play an important role in the mean curvature flow, as they correspond to the self-similar solution to mean curvature flow, and also describe all possible blow ups at a given singularity.

In \cite{CM09} and \cite{Ding-Xin}, Colding-Minicozzi and Ding-Xin considered the compactness property for the space of self-shrinkers in $\R^3$. In this paper, we prove the following compactness theorem for $f$-minimal surface, which is a generalization of the classical compactness of minimal surfaces in closed three manifold with positive Ricci curvature by Choi and Schoen \cite{Choi-1}.
\begin{thm}\label{main-thm}
Let $(N^3,g,e^{-f}dv)$ be a closed metric measure space with positive Bakry-\'{E}mery Ricci curvature. Then the space of closed embedded $f$-minimal surfaces of fixed topological type in $N$ is compact in the $C^k$ topology for any $k\geq 2$.
\end{thm}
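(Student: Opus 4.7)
The plan is to adapt the classical compactness theorem of Choi--Schoen for minimal surfaces in a closed three-manifold of positive Ricci curvature to the weighted setting. Since $N^3$ is closed, $f$ is uniformly bounded, so the weighted area $A_f(M)=\int_M e^{-f}dA$ and the ordinary area $A(M)$ are comparable. The four standard ingredients I would establish are: (i) a uniform $f$-area bound, (ii) a uniform total curvature bound $\int_M|A|^2e^{-f}dA$, (iii) a weighted $\epsilon$-regularity theorem, and (iv) a removable singularity result at the finitely many concentration points.

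For (i), I would prove a weighted analogue of the Choi--Wang eigenvalue estimate: for an $f$-minimal surface $M$ in $(N^3,g,e^{-f}dv)$ with $Ric_f>0$, the first eigenvalue of the drift Laplacian $\Delta_f$ on $M$ is bounded below by a positive constant depending only on a lower bound for $Ric_f$. Combined with a weighted Hersch--Yang--Yau inequality, which bounds $\lambda_1A_f$ from above via conformal maps to $S^2$ in terms of the topological type, this produces the uniform $f$-area bound. For (ii), the Gauss equation for $M^2\subset N^3$, the $f$-minimality relation $H=\langle\bar{\nabla}f,\nu\rangle$, and Gauss--Bonnet together express $\int_M|A|^2e^{-f}dA$ in terms of the genus, the area bound, and ambient geometric quantities (the ambient sectional curvatures and $|\bar{\nabla}f|^2$, which are uniformly bounded on the closed $N$).

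For (iii), I would derive a weighted Simons-type identity on the $f$-minimal surface yielding
\begin{equation*}
\Delta_f |A|^2\geq -C_1|A|^4-C_2|A|^2-C_3,
\end{equation*}
with constants depending only on $(N,g,f)$. Weighted Moser iteration, using cutoffs against $e^{-f}dA$, then produces a threshold $\epsilon_0>0$ such that whenever $\int_{M\cap B_r(p)}|A|^2e^{-f}dA\leq\epsilon_0$ one has $\sup_{M\cap B_{r/2}(p)}|A|^2\leq C/r^2$. Combining (ii) and (iii), curvature concentrates at only finitely many points (each absorbing at least $\epsilon_0$ of the total $f$-curvature); away from them the surfaces $M_j$ in the sequence are locally graphical with uniform $C^k$ control, obtained from Schauder estimates applied to the $f$-minimal surface equation viewed as a quasilinear elliptic PDE with smooth coefficients. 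A diagonal extraction then produces a smooth embedded $f$-minimal limit off of the concentration set, and an Allard/White-type removable singularity argument, which adapts to the weighted case because the drift contributes only a bounded lower-order term to the minimal surface system, extends the limit smoothly across the concentration points. Embeddedness together with the $f$-area bound forces multiplicity one in the limit, which preserves the topological type.

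The main obstacle is step (iii): deriving the weighted Simons identity with the correct drift term, and pushing weighted Moser iteration through for $\Delta_f$ on a surface immersed in $(N,g,e^{-f}dv)$, ensuring that the cutoff arguments survive in the presence of $\bar{\nabla}f$ and that the threshold $\epsilon_0$ depends only on ambient data. A secondary technical issue is the removable singularity in the weighted setting, but this reduces to the classical statement after observing that on a small ball around a concentration point the $f$-minimal equation differs from the ordinary minimal surface equation by a smooth bounded lower-order term, so that the classical capacity and monotonicity arguments go through with only cosmetic modifications.
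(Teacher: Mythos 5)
Your outline has a genuine gap at the multiplicity-one step. You assert that ``embeddedness together with the $f$-area bound forces multiplicity one in the limit,'' but neither hypothesis rules out higher multiplicity: a sequence of embedded surfaces with uniformly bounded (weighted) area can perfectly well converge as a double sheet to a limit surface, and this is exactly the scenario that the Choi--Schoen argument must exclude by hand. The correct mechanism --- both in Choi--Schoen and in this paper --- is the uniform positive lower bound on the first eigenvalue: if the convergence away from the concentration set had multiplicity at least two, one uses the two nearby graphical sheets over the limit to build a test function (essentially the normalized separation of the sheets, cut off near the singular points) whose Rayleigh quotient tends to zero, contradicting the lower bound $\lambda_1\geq\kappa/2$ (equivalently, producing a positive Jacobi-type field, impossible under $Ric_f>0$). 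You invoke the eigenvalue estimate only to get the area bound and never re-use it where it is in fact indispensable; without this argument the limit could a priori be a multiplicity-two surface and smooth single-sheeted convergence across the concentration points (via Allard) would fail.

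A second, smaller gap is in step (i): the ``weighted Choi--Wang estimate'' is not a routine adaptation, because the Reilly-formula argument requires the $f$-minimal surface to bound a domain in $N$ on which one solves the Dirichlet problem. Ma--Du's version assumes orientability and that $M$ separates $N$; to remove these assumptions one needs a Frankel-type intersection lemma (hence connectedness of embedded $f$-minimal hypersurfaces), a Lawson-type separation lemma, and a passage to the universal cover, which is compact because $Ric_f>0$ forces $\pi_1(N)$ finite --- this is precisely the content of Lemmas~\ref{lem-1}, \ref{lem-2} and Theorem~\ref{thm-3}, and the same finite-cover reduction is also needed before running the multiplicity-one argument. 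Finally, note that the paper bypasses your most labor-intensive step (iii) entirely: since an $f$-minimal surface in $(N^3,g)$ is minimal in the conformal metric $\td g=e^{-f}g$, whose induced area form is $e^{-f}d\mu$, the weighted area and total curvature bounds become honest bounds for minimal surfaces in $(N^3,\td g)$ and the standard singular compactness proposition applies verbatim; your weighted Simons/Moser route is viable (it is how the self-shrinker case is handled) but substantially more work, and it still does not substitute for the missing multiplicity-one argument.
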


Here \textit{closed} means compact and without boundary. Note that when $f$ is a constant function, we get the classical Choi-Schoen's theorem (see \cite[Theorem 1]{Choi-1}). We remark that our approach in section 3 to prove Theorem \ref{main-thm} also works for positive $m$-Bakry-\'{E}mery Ricci curvature case, with some slightly adjustments of the Bochner formula and Reilly formula, see \eqref{Bochner-2} and \eqref{Reilly-2}. But since $Ric_f\geq Ric_f^m$, we get no extension results of Theorem \ref{main-thm}. Recently Ailana Fraser and Martin Li \cite{F-Li} proved a compactness theorem for the space of embedded minimal surfaces with free boundary in three-manifold with non-negative Ricci curvature and convex boundary. So it's also interesting to get an analogue result of Theorem \ref{main-thm} for $f$-minimal surfaces with free boundary case.

In our proof of Theorem \ref{main-thm}, one of the key ingredients is the observation that a $f$-minimal hypersurface $M$ is a minimal hypersurface in $N$ with the conformal changed metric $\td{g}=e^{-\f 2{n-1}f}g$. This can be easily seen from the first variation formula of the volume. We will use this observation in the next section to get the singular compactness result. However, Theorem \ref{main-thm} cannot directly follow from Choi-Schoen's compactness theorem for minimal surfaces in three-manifold with positive Ricci curvature. In fact, the Ricci curvature of the conformal changed metric $\td{g}$ may not have a sign: Recall that for $n\geq 3$, the scalar curvature $\td{R}$ of the conformal changed metric $\td{g}=e^{-\f 2{n-1}f}g$ is given by (cf. \cite{Schoen-Yau})
\begin{align*}
    \td{R}=&e^{\f 2{n-1}f}\left(-\f{n-2}{n-1}|\bar{\nabla}f|^2+2\bar{\Delta}f+R\right),
\end{align*}
where $R$ is the scalar curvature of $(N,g)$. Although the positive Bakry-\'{E}mery Ricci curvature assumption implies that $R+\bar{\Delta}f>0$, we cannot conclude that the scalar curvature and then the Ricci curvature of the conformal metric have a sign. For example, the Gaussian soliton $(\R^n,g_0,e^{-\f 14|x|^2}dv)$ has positive Bakry-\'{E}mery Ricci curvature, while the scalar curvature $\td{R}$ of the conformal changed metric $\td{g}=e^{-\f 1{2(n-1)}|x|^2}g_0$ on $\R^n$ is
\begin{align*}
    \td{R}=&e^{\f 1{2(n-1)}|x|^2}\left(n-\f {n-2}{4(n-1)}|x|^2\right),
\end{align*}
which is positive when $|x|$ is small and becomes negative when $|x|$ is large. Therefore the Ricci curvature of $\td{g}$ does not have a sign.

Our proof follows from the standard argument in Choi-Schoen's paper: We first need a first eigenvalue estimate of the $f$-Laplacian $\Delta_f=\Delta-\nabla f\cdot\nabla$ for $f$-minimal surfaces in manifold with positive Bakry-\'{E}mery Ricci curvature. This is also one of the key ingredients in the proof of Theorem \ref{main-thm}, and was proved by Li Ma and Sheng-Hua Du \cite{MD} recently. In section 3, we show that Ma-Du's result holds under a weaker condition, for example, the orientability assumption is not necessary. Then by considering $f$-minimal surface as a minimal surface in $(N,\td{g})$, and combining with Yang-Yau's inequality \cite{YY}, we get an apriori upper bound on the weighted area of the $f$-minimal surface in terms of the topology. This together with the Gauss equation and Gauss-Bonnet theorem gives an upper bound for the total curvature of $f$-minimal surface. Finally, by the singular compactness proposition and a contradiction argument, we get the smooth compactness theorem \ref{main-thm}.

As a corollary of theorem \ref{main-thm}, we get the following curvature estimates. The proof is by using a contradiction argument like in Choi-Schoen's paper\cite{Choi-1}.
\begin{corr}
Let $(N^3,g,e^{-f}dv)$ be a closed metric measure space with positive Bakry-\'{E}mery Ricci curvature. There exists a constant $C$ depending only on $N$ and an integer $\chi$ such that if $M$ is a closed embedded $f$-minimal surface of Euler characteristic $\chi$ in $N$, then
\begin{align*}
    \max_M\|h\|\leq~C,
\end{align*}
where $\|h\|$ is the norm of the second fundamental form of $M\subset N$.
\end{corr}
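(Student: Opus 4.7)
The plan is to argue by contradiction and reduce directly to Theorem~\ref{main-thm}. Suppose the conclusion fails. Then there exists a sequence of closed embedded $f$-minimal surfaces $M_i \subset N$, each of Euler characteristic $\chi$, with
\begin{align*}
a_i \;:=\; \max_{M_i}\|h_i\| \;\longrightarrow\; \infty,
\end{align*}
where $h_i$ is the second fundamental form of $M_i$ in $(N,g)$. Since fixing the integer $\chi$ leaves only finitely many possible topological types of closed surfaces (orientable of genus $g$ with $\chi=2-2g$, or non-orientable of genus $k$ with $\chi=2-k$), I first pass to a subsequence on which all $M_i$ share a common topological type, so that the hypothesis ``fixed topological type'' of Theorem~\ref{main-thm} is met.

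Next I apply Theorem~\ref{main-thm}: after passing to a further subsequence, $M_i \to M_\infty$ in the $C^k$ topology for every $k\geq 2$, where $M_\infty$ is a closed embedded $f$-minimal surface in $N$. Taking $k=2$ already suffices. Concretely, $C^2$ convergence means that for all sufficiently large $i$, each $M_i$ can be written as a normal graph over $M_\infty$ by a function $u_i$ (in a Fermi tubular neighbourhood of $M_\infty$) with $\|u_i\|_{C^2(M_\infty)} \to 0$. Since the second fundamental form depends continuously (in fact polynomially) on the first two derivatives of the graphing function and on the metric $g$, this gives $\|h_i\|_{C^0(M_i)} \to \|h_\infty\|_{C^0(M_\infty)}$. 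In particular $a_i$ stays bounded by $\max_{M_\infty}\|h_\infty\|+1$ for large $i$, contradicting $a_i \to \infty$.

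The only genuinely non-trivial input is Theorem~\ref{main-thm} itself, whose proof was outlined earlier; the present argument is a standard pigeonhole plus contradiction. The one small point worth being careful about is the passage from $C^2$ convergence of graphs to uniform convergence of the norms $\|h_i\|$ on \emph{moving} domains $M_i$ (rather than on a fixed domain): this is handled by transporting everything to $M_\infty$ via the normal graph parametrization, where the second fundamental form of $M_i$ becomes an expression in $u_i, \nabla u_i, \nabla^2 u_i$ and the ambient metric in Fermi coordinates, which converges uniformly to the corresponding expression for $M_\infty$ (obtained by setting $u_\infty\equiv 0$).
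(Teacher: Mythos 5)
Your proposal is correct and is essentially the paper's argument: the paper also deduces the curvature bound by a Choi--Schoen-style contradiction argument from the compactness Theorem~\ref{main-thm}, with the $C^2$ convergence of a putative curvature-blow-up sequence forcing uniform bounds on $\|h\|$. Your explicit pigeonhole reduction from fixed Euler characteristic to fixed topological type, and the remark on transporting $\|h_i\|$ to the limit surface via normal graphs, are just careful spellings-out of the same route.
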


Next, in section 4, we will use the Reilly formula to give a Lichnerowicz type lower bound for the first eigenvalue of $f$-Laplacian on compact manifold with posivite $m$-Bakry-\'{E}mery Ricci curvature. The classical Lichnerowicz theorem \cite{Lich} says that for an $n$-dimensional closed Riemannian manifold with Ricci curvature $Ric\geq (n-1)K>0$, then the first eigenvalue of Lapalcian on $N$ satisfies $\lambda_1(\bar{\Delta})\geq nK$. Obata \cite{Oba} then proved that the equality holds only when $N$ is isometric to the $n$-sphere of radius $1/{\sqrt{K}}$. This was generalized by Reilly \cite{Reil} to compact manifold with mean-convex boundary for Dirichlet problem,  and by Escobar \cite{Esco} to compact manifold with convex boundary for Neumann problem. The same lower bound for $\lambda_1(\bar{\Delta})$ holds and the equality holds when $N$ is isometric to the $n$-dimensional hemisphere of radius $1/{\sqrt{K}}$. The following theorem shows that a similar result also holds for manifold with positive $m$-Bakry-\'{E}mery Ricci curvature and with some suitable boundary condition. We remark that the result $\lambda_1\geq mK$ in theorem \ref{lichn} was essentially proved in \cite{MD}, just with some slightly different expressions. Our contribution is the rigidity result when the equality $\lambda_1= mK$ holds.
\begin{thm}\label{lichn}
Let $(N^n,g)$ be an $n$-dimensional Riemannian manifold (possibly with boundary $\pt N$) and $f$ be a smooth function on $N$. Assume that the $m$-Bakry-\'{E}mery Ricci curvature satisfies $Ric_f^m\geq (m-1)K>0$. Furthermore, if the boundary $\pt N$ is nonempty, for Dirichlet problem we assume $f$-mean curvature on $\pt N$ is nonnegative; for Neumann problem we assume the boundary $\pt N$ is weakly convex, i.e., the second fundamental form $h\geq 0$ on $\pt N$. Then the first eigenvalue $\lambda_1$ of the $f$-Laplacian on $N$ satisfies:
\begin{align}\label{lichn-2}
    \lambda_1\geq ~mK.
\end{align}
Moreover, Equality is attained only when $m=n$, $f$ is constant and $Ric_f^m=Ric$. In this case, if $N$ has no boundary, then $N$ is the $n$-sphere of radius $1/{\sqrt{K}}$; if $N$ has nonempty boundary, then $N$ is the $n$-dimensional hemisphere of radius $1/{\sqrt{K}}$.
\end{thm}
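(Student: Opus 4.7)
The plan is to adapt the Bochner--Lichnerowicz argument to the weighted setting. The key observation is that the Cauchy--Schwarz trick $\tfrac{a^2}{n}+\tfrac{b^2}{m-n}\ge\tfrac{(a-b)^2}{m}$ (applied with $a=\bar\Delta u$, $b=\langle\bar\nabla f,\bar\nabla u\rangle$) combines with the Hessian inequality $|\bar\nabla^2 u|^2\ge(\bar\Delta u)^2/n$ to give the refined bound $|\bar\nabla^2u|^2+\tfrac{1}{m-n}\langle\bar\nabla f,\bar\nabla u\rangle^2\ge\tfrac{(\Delta_f u)^2}{m}$. Substituting this into the $f$-Bochner formula $\tfrac12\Delta_f|\bar\nabla u|^2=|\bar\nabla^2u|^2+\langle\bar\nabla u,\bar\nabla\Delta_fu\rangle+Ric_f(\bar\nabla u,\bar\nabla u)$ and using $Ric_f=Ric_f^m+\tfrac{1}{m-n}\bar\nabla f\otimes\bar\nabla f$ yields
\[
\tfrac12\Delta_f|\bar\nabla u|^2\ \ge\ \tfrac{(\Delta_fu)^2}{m}+\langle\bar\nabla u,\bar\nabla\Delta_fu\rangle+Ric_f^m(\bar\nabla u,\bar\nabla u).
\]

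Next, take $u$ to be a first $f$-eigenfunction, $\Delta_f u=-\lambda_1u$, integrate against $e^{-f}\,dv$, and use $\int_N|\bar\nabla u|^2e^{-f}dv=\lambda_1\int_N u^2 e^{-f}dv$ together with $Ric_f^m\ge(m-1)K$. The boundary contribution from the left-hand side is handled by the weighted Reilly-type identity; under the Dirichlet hypothesis it becomes $-2\int_{\partial N}H^f(\partial_\nu u)^2 e^{-f}dA\le 0$ (using $u\equiv 0$ on $\partial N$ to derive $\bar\nabla^2u(\nu,\nu)=-H^f\partial_\nu u$), and under Neumann it becomes $-2\int_{\partial N}h(\bar\nabla^\top u,\bar\nabla^\top u)e^{-f}dA\le 0$. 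In either case a short rearrangement yields $\lambda_1\ge mK$.

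For the rigidity, equality forces saturation in each step: $\bar\nabla^2u=\tfrac{\bar\Delta u}{n}g$, $(m-n)\bar\Delta u=-n\langle\bar\nabla f,\bar\nabla u\rangle$, $Ric_f^m(\bar\nabla u,\bar\nabla u)=(m-1)K|\bar\nabla u|^2$, together with vanishing boundary contributions. Combined with $\Delta_fu=-mKu$, these give $\bar\Delta u=-nKu$, $\bar\nabla^2u+Ku\,g=0$, and $\langle\bar\nabla f,\bar\nabla u\rangle=(m-n)Ku$. At an interior maximum point $p$ of $u$ (which exists with $u(p)>0$: on a closed manifold and in the Neumann case $u$ is orthogonal to constants and so must change sign, while for Dirichlet the first eigenfunction is positive inside) we have $\bar\nabla u(p)=0$, forcing $(m-n)Ku(p)=0$ and hence $m=n$. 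Once $m=n$, $f$ is constant by definition, the equation $\bar\nabla^2u+Ku\,g=0$ holds on $N$, and we invoke the theorems of Obata \cite{Oba}, Reilly \cite{Reil} and Escobar \cite{Esco} to conclude that $N$ is isometric to the round $n$-sphere or hemisphere of radius $1/\sqrt K$ according as $\partial N$ is empty or not.

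The main technical obstacle I anticipate is the careful bookkeeping of the two boundary terms, particularly verifying the identity $\bar\nabla^2u(\nu,\nu)=-H^f\partial_\nu u$ under Dirichlet and producing the correct sign in the Neumann expression; both are conveniently packaged by a single application of the weighted Reilly formula alluded to by the authors.
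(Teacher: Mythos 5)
Your proof of the inequality $\lambda_1\ge mK$ is correct and follows essentially the same route as the paper: the same algebraic refinement $|\bar{\nabla}^2u|^2+\frac{1}{m-n}\langle\bar{\nabla}f,\bar{\nabla}u\rangle^2\ge\frac{1}{m}(\bar{\Delta}_fu)^2$ plugged into the weighted Bochner formula, integrated with the weighted Reilly boundary terms, which under the Dirichlet hypothesis reduce to $H^f(\partial u/\partial\nu)^2$ (your identity $\bar{\nabla}^2u(\nu,\nu)=-H^f\,\partial_\nu u$ is right, using both $u=0$ and hence $\bar{\Delta}_fu=-\lambda_1u=0$ on $\partial N$) and under Neumann to $h(\nabla^{\top}u,\nabla^{\top}u)$. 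Where you genuinely diverge is the equality analysis for $m>n$: the paper uses only the saturation of the algebraic inequality, namely $\bar{\Delta}u+\frac{n}{m-n}\langle\bar{\nabla}f,\bar{\nabla}u\rangle=0$, multiplies by $u$ and integrates against the auxiliary weight $e^{\frac{n}{m-n}f}dv$ to force $u$ constant, a contradiction; you instead exploit the full set of equality conditions together with $\bar{\Delta}_fu=-mKu$ to get the pointwise relations $\bar{\nabla}^2u+Ku\,g=0$ and $\langle\bar{\nabla}f,\bar{\nabla}u\rangle=(m-n)Ku$, and evaluate the latter at a critical point where $u\neq0$ to force $m=n$. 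Both are valid; your route has the small bonus of producing the Obata equation directly (so the closed case follows immediately from Obata), while the paper's integration trick avoids any discussion of where the extremum of $u$ sits. One small imprecision on your side: in the Neumann case the maximum of $u$ may lie on $\partial N$, so it need not be an interior maximum; but since the tangential gradient vanishes at a boundary maximum and $\partial_\nu u=0$ by the Neumann condition, you still get $\bar{\nabla}u(p)=0$ with $u(p)>0$, and your conclusion stands after this one-line fix.
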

One can compare theorem \ref{lichn} with Bakry-Qian's eigenvalue comparison results in \cite{BaQ}, which states that the first eigenvalue $\lambda_1$ (with Neumann boundary condition when the boundary is nonempty) of $f$-Laplacian is bounded from below by the first eigenvalue of a one-dimensional model. See also \cite{BN} and \cite{FLL} for more recent results about the first eigenvalue of $f$-Laplacian on manifolds with positive Bakry-\'{E}mery curvature.

We remark that when $Ric_f^m\geq (m-1)K>0$, $(N^n,g)$ is automatically compact and the diameter of $N$ satisfies $diam(N)\leq \pi/{\sqrt{K}}$, see \cite[Theorem 5]{Qian}. So we don't need to assume $N$ is compact in theorem \ref{lichn}. In \cite{Ruan}, Ruan proved that when $diam(N)$ is equal to $\pi/{\sqrt{K}}$, then $(N,g)$ is isometric to the $n$-sphere of radius $1/{\sqrt{K}}$. In section 5, we will prove a similar result for manifolds with nonnegative $m$-Bakry-\'{E}mery Ricci curvature and $f$-mean convex boundary (i.e., the $f$-mean curvature on $\pt N$ is positive).

\begin{thm}\label{thm-4}
Let $(N^n,g)$ be an $n$-dimensional complete Riemannian manifold with nonempty boundary and $f$ be a smooth function on $N$. Assume that the $m$-Bakry-\'{E}mery Ricci curvature is nonnegative on $N$, and the $f$-mean curvature of the  boundary $\pt N$ satisifes $H^f\geq (m-1)K>0$ for some constant $K>0$. Let $d$ denote the distance function on $N$. Then
\begin{align}\label{dist}
    \sup_{x\in N}d(x,\pt N)\leq &\f 1K.
\end{align}
Moreover, if we assume that $\pt N$ is compact, then $N$ is also compact and equality  holds in \eqref{dist} only when $N$ is isometric to an $n$-dimensional Euclidean ball of radius $1/K$.
\end{thm}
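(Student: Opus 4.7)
My plan is to derive \eqref{dist} by an $f$-Laplacian Riccati comparison along a normal minimizing geodesic from the boundary, and then extract the rigidity by analyzing the equality case. Fix $p\in N$, set $\rho(\cdot)=d(\cdot,\pt N)$, and take a unit-speed minimizing geodesic $\gamma:[0,L]\to N$ with $\gamma(0)\in\pt N$ and $\gamma(L)=p$, so that $L=\rho(p)$ and $\gamma'(0)$ is the inward unit normal to $\pt N$. On the open segment $\gamma((0,L))$, $\rho$ is smooth with $\bar{\nabla}\rho=\gamma'$, so $u(s):=\bar{\Delta}_f\rho(\gamma(s))$ is well defined. Bochner's formula applied to $\rho$ (with $|\bar{\nabla}\rho|=1$) gives $\f{d}{ds}\bar{\Delta}\rho=-|\bar{\nabla}^2\rho|^2-Ric(\gamma',\gamma')$; combining this with $|\bar{\nabla}^2\rho|^2\geq(\bar{\Delta}\rho)^2/(n-1)$, with $Ric_f(\gamma',\gamma')=Ric_f^m(\gamma',\gamma')+\f{1}{m-n}(f')^2$ where $f'=\langle\bar{\nabla} f,\gamma'\rangle$, and with the elementary estimate
\begin{align*}
\f{x^2}{n-1}+\f{y^2}{m-n}\geq\f{(x-y)^2}{m-1}
\end{align*}
(equivalent to $((m-n)x+(n-1)y)^2\geq 0$) applied to $x=\bar{\Delta}\rho$ and $y=f'$, one obtains the clean Riccati inequality $u'(s)\leq -u^2/(m-1)$. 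At $s=0$ a direct computation in terms of the second fundamental form of $\pt N$ gives $\bar{\Delta}_f\rho|_{\pt N}=-H^f$, so $u(0)\leq -(m-1)K$. Comparison with the model ODE $v'=-v^2/(m-1)$, $v(0)=-(m-1)K$, whose solution $v(s)=-(m-1)K/(1-sK)$ blows up at $s=1/K$, yields $1/u(s)\geq 1/v(s)$ along $\gamma$ and forces $u\to-\infty$ at or before $s=1/K$. Since $u$ stays smooth on $(0,L)$, this gives $L\leq 1/K$, i.e.\ \eqref{dist}. When $\pt N$ is compact, the bound $\rho\leq 1/K$ confines $N$ to the closed $1/K$-neighborhood of $\pt N$, making $N$ compact.

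For the rigidity, assume $\rho(p)=1/K$ is attained and let $\gamma$ be a corresponding minimizing geodesic. A strict Riccati inequality at any $s_0\in[0,1/K)$ would, by the $1/u$ versus $1/v$ comparison, force $1/u$ to become positive (equivalently $u$ to blow up) strictly before $s=1/K$, contradicting smoothness of $\rho$ on the interior of $\gamma$. Hence equality holds pointwise along the entire segment $\gamma|_{[0,1/K)}$, which forces $Ric_f^m(\gamma',\gamma')\equiv 0$, the umbilicity $\bar{\nabla}^2\rho=\f{\bar{\Delta}\rho}{n-1}(g-d\rho\otimes d\rho)$ along $\gamma$, the boundary identity $H^f(\gamma(0))=(m-1)K$, and the algebraic equality $(m-n)\bar{\Delta}\rho+(n-1)f'\equiv 0$ along $\gamma$. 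If $m>n$, the last relation combined with $u(s)=-(m-1)K/(1-sK)$ gives $f'(s)=(m-n)K/(1-sK)$, which integrates to $f(\gamma(s))=f(\gamma(0))-(m-n)\log(1-sK)$ and diverges to $+\infty$ as $s\to (1/K)^-$, contradicting the smoothness of $f$ at $p$. Therefore $m=n$, the convention in \eqref{m-bakry} makes $f$ constant, and the hypotheses reduce to the classical ones: $Ric\geq 0$, $H\geq (n-1)K>0$, and inradius equal to $1/K$. The classical boundary-inradius rigidity theorem (due to Kasue) then identifies $N$ with the $n$-dimensional Euclidean ball of radius $1/K$.

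The step I expect to be the main obstacle is the rigidity, specifically (i) propagating Riccati equality from a single point to the whole segment $\gamma|_{[0,1/K)}$, and (ii) using the equality in the general $m>n$ regime to produce a contradiction via the divergence of $f$. The sharp algebraic identity $((m-n)x+(n-1)y)^2\geq 0$, which is precisely the mechanism by which the correction term $\f{1}{m-n}\bar{\nabla} f\otimes\bar{\nabla} f$ in the definition of $Ric_f^m$ is absorbed into the classical Riccati framework, is the one technical observation that makes the reduction to the classical $(Ric,H)$ setting clean.
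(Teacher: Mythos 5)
Your argument is correct, but it takes a genuinely different route from the paper. For the inequality \eqref{dist}, the paper works variationally: it plugs the fields $V_i=\varphi e_i$ with $\varphi(s)=s/d$ into the second variation of arc length, absorbs the $\bar{\nabla}^2 f$ term by integration by parts and Cauchy--Schwarz, and reads off $d\le 1/K$; you instead derive the sharp pointwise Riccati inequality $u'\le -u^2/(m-1)$ for $u=\bar{\Delta}_f\rho$ along a minimizing normal geodesic, using exactly the algebraic identity $((m-n)x+(n-1)y)^2\ge 0$ that turns the $\f1{m-n}\bar{\nabla}f\otimes\bar{\nabla}f$ correction into the dimension shift $n-1\to m-1$. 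This is essentially Qian's comparison $\bar{\Delta}_f\rho\le (m-1)/\rho$, which the paper itself only invokes later (as \eqref{f-lap-rho}) in the rigidity step, so your route makes the mechanism behind the distance bound more transparent, while the paper's variational proof avoids any discussion of smoothness of $\rho$ (your use of smoothness of $\rho$ on the open minimizing segment and of $\bar{\Delta}_f\rho=-H^f$ at the foot point is standard and fine). The rigidity arguments diverge more sharply: the paper shows $\pt N$ is connected via the Reilly formula, identifies $N$ with the geodesic ball $B_{1/K}(x_0)$ by an open-closed argument with the Eschenburg maximum principle and Calabi's trick at conjugate points, proves the weighted volume identity $|\pt N|=m|N|$ by combining the $f$-Laplacian comparison with a Ros-type Heintze--Karcher argument through \eqref{Reilly-2}, and only then extracts $m=n$, $f$ constant from the equality case and quotes Ros's theorem; you instead localize everything on a single inradius-realizing geodesic, propagate equality in the Riccati comparison along it (your sketch is right: integrating $(-1/u)'\le -1/(m-1)$ against positivity of $-1/u$ up to $s=1/K$ forces pointwise equality and $H^f(\gamma(0))=(m-1)K$), and dispose of $m>n$ by integrating $f'=(m-n)K/(1-sK)$ to contradict smoothness of $f$ at the inradius point --- an argument parallel in spirit to, but cleaner than, the paper's section-4-style equality analysis. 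The trade-off is that after reducing to $m=n$ with $f$ constant you must cite the classical inradius rigidity theorem (Kasue; this is precisely Theorem 1.1 of the Li reference the paper models itself on), so the geometric identification of $N$ with the Euclidean ball is outsourced, whereas the paper's longer route stays in the weighted setting throughout and relies externally only on Ros's theorem at the very end. Both reductions are legitimate; your proof is shorter and pinpoints why equality is impossible for $m>n$, the paper's is closer to self-contained on the rigidity geometry.
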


Theorem \ref{thm-4} is an analogue result of Theorem 1.1 in \cite{Martin}, where the manifold with nonnegative Ricci curvature and with mean convex boundary was considered. Our proof of theorem \ref{thm-4} follows the arguments in \cite{Martin}, with some ajustments. As in \cite{Martin}, we also conjecture that the uniform boundary convexity could make $\pt N$ to be compact and hence $N$ would also be compact.

\begin{ack}The first author is grateful to the Department of Mathematics at K. U. Leuven, where part of this work was carried out.
\end{ack}

\section{Reilly formula on metric measure space}

In this section, we first exhibit the Reilly formulas on metric measure space, which are the important tools to prove our main theorems.

Let $(N^n,g,e^{-f}dv)$ be a compact metric measure space with boundary $\pt N$. The $f$-Laplacian $\bar{\Delta}_f=\bar{\Delta}-\bar{\nabla}f\cdot\bar{\nabla}$ on $N$ is self-adjoint with respect to the weighted measure $e^{-f}dv$. A simple calculation gives the following Bochner formula (see \cite{Wei-Wylie,MD,MW}) for any function $u\in C^3(N)$:
\begin{align}\label{Bochner}
    \f 12\bar{\Delta}_f|\bar{\nabla} u|^2=|\bar{\nabla}^2u|^2+Ric_f(\bar{\nabla}u,\bar{\nabla}u)+g(\bar{\nabla}u,\bar{\nabla}\bar{\Delta}_fu).
\end{align}
Using the Bochner formula \eqref{Bochner} and integration by part, Li Ma and Sheng-Hua Du \cite{MD} obtained the following Reilly formula:
\begin{align}
    0=&\int_{N}(Ric_f(\bar{\nabla}u,\bar{\nabla}u)-|\bar{\Delta}_fu|^2+|\bar{\nabla}^2u|^2)e^{-f}dv\label{Reilly}\\
    &\quad +\int_{\pt N}\left((\Delta_fu+H^f\f{\pt u}{\pt \nu})\f{\pt u}{\pt \nu}-\langle\nabla u,\nabla\f{\pt u}{\pt \nu}\rangle+h(\nabla u,\nabla u)\right)e^{-f}d\mu.\nonumber
\end{align}
Here, $Ric_f$ is the Bakry-\'{E}mery Ricci tensor of $N$; $dv$ and $d\mu$ are volume forms on $N$ and $\pt N$ respectively. $\bar{\Delta}_f,\bar{\nabla}$ and $\bar{\nabla}^2$ are the $f$-Laplacian, gradient and Hessian on $N$ respectively; $\Delta_f$ and $\nabla$ are the $f$-Laplacian and gradient operators on $\pt N$; $\nu$ is the unit outward normal of $\pt N$; $H^f$ and $h$ are the $f$-mean curvature and second fundamental form of $\pt N$ in $N$ with respect to $\nu$ respectively.

The Bochner formula \eqref{Bochner} looks similar to the classic Bochner formula. However we have a difficulty that $tr(\bar{\nabla}^2u)\neq \bar{\Delta}_fu$. One way to deal with this is to consider the Bochner formula for $m$-Bakry-\'{E}mery Ricci curvature. When $m>n$, let $z=\f mn$ and by a basic algebraic inequality $(a+b)^2\geq \f{a^2}{z}-\f{b^2}{z-1}$ for $z>1$, we have
\begin{align*}
    |\bar{\nabla}^2u|^2\geq \f 1n(\bar{\Delta} u)^2=&\f 1n(\bar{\Delta}_fu+\bar{\nabla}f\cdot\bar{\nabla}u)^2\\
    \geq&\f 1n\left(\f nm(\bar{\Delta}_fu)^2-\f n{m-n}(\bar{\nabla}f\cdot\bar{\nabla}u)^2\right)\\
    =&\f 1m(\bar{\Delta}_fu)^2-\f 1{m-n}(\bar{\nabla}f\cdot\bar{\nabla}u)^2.
\end{align*}
Substituting this into \eqref{Bochner},\eqref{Reilly} and using the definition \eqref{m-bakry} of $m$-Bakry-\'{E}mery Ricci curvature, we get
\begin{align}\label{Bochner-2}
    \f 12\bar{\Delta}_f|\bar{\nabla} u|^2\geq&\f 1m(\bar{\Delta}_fu)^2+Ric_f^m(\bar{\nabla}u,\bar{\nabla}u)+g(\bar{\nabla}u,\bar{\nabla}\bar{\Delta}_fu).
\end{align}
and
\begin{align}
    0\geq&\int_{N}(Ric_f^m(\bar{\nabla}u,\bar{\nabla}u)-\f {m-1}m|\bar{\Delta}_fu|^2)e^{-f}dv\label{Reilly-2}\\
    &\quad +\int_{\pt N}\left((\Delta_fu+H^f\f{\pt u}{\pt \nu})\f{\pt u}{\pt \nu}-\langle\nabla u,\nabla\f{\pt u}{\pt \nu}\rangle+h(\nabla u,\nabla u)\right)e^{-f}d\mu.\nonumber
\end{align}

Note that the Bochner formula \eqref{Bochner-2} looks very similar with the Bochner formula for the Ricci tensor of an $m$-dimensional manifold. This seems to be Bakry-\'{E}mery's motivation \cite{BM} for the definiton of the $m$-Bakry-\'{E}mery Ricci tensor and for their more general curvature dimension inequalities for diffusion operators. See also \cite{XDLi,XMLi} for the Bochner formula \eqref{Bochner-2}.

\section{The space of $f$-minimal surfaces}

In this section, we assume that $(N^n,g,e^{-f}dv)$ is a closed metric measure space with positive Bakry-\'{E}mery Ricci curvature $Ric_f$. We will prove Theorem \ref{main-thm}. First, we need the following lemma of Frankel type , which was stated in G. Wei and W. Wylie's paper (see Theorem 7.4 in \cite{Wei-Wylie}). Here we give an alternative proof using the Reilly formula \eqref{Reilly}.
\begin{lem}\label{lem-1}
Let $(N^n,g,e^{-f}dv)$ be a closed metric measure space with positive $Ric_f$. Then any two closed embedded $f$-minimal hypersurfaces $\Sigma_1$ and $\Sigma_2$ in $N$ must intersect, i.e.,$\Sigma_1\cap\Sigma_2\neq{\O}$. So that any closed embedded $f$-minimal hypersurface in $N$ is connected.
\end{lem}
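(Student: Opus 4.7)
The plan is by contradiction: assume $\Sigma_1\cap\Sigma_2=\emptyset$ and apply the Reilly formula \eqref{Reilly} to a carefully chosen test function on a domain bounded by the two hypersurfaces. First I would construct an open domain $\Omega\subset N$ with $\partial\Omega=\Sigma_1\cup\Sigma_2$. Let $p_i\in\Sigma_i$ realize $d(\Sigma_1,\Sigma_2)>0$, joined by a minimizing geodesic $\gamma$, and let $\Omega$ be the connected component of $N\setminus(\Sigma_1\cup\Sigma_2)$ containing the interior of $\gamma$. Using the local two-sidedness of each embedded hypersurface (available because $H^f$ is defined with respect to an outer unit normal) together with an open-closed continuity argument inside each connected $\Sigma_i$, one verifies that $\partial\Omega\cap\Sigma_i=\Sigma_i$, so $\bar\Omega$ is a compact smooth manifold with boundary $\Sigma_1\sqcup\Sigma_2$.

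Next, solve the Dirichlet problem $\bar\Delta_f u=0$ in $\Omega$ with $u=0$ on $\Sigma_1$ and $u=1$ on $\Sigma_2$; existence and smoothness follow from standard elliptic theory, since $\bar\Delta_f$ is uniformly elliptic and self-adjoint with respect to $e^{-f}dv$. Plug $u$ into \eqref{Reilly}. On the boundary $u$ is constant on each component, so the intrinsic gradient $\nabla u$ vanishes on $\partial\Omega$; this makes $\Delta_f u$, $h(\nabla u,\nabla u)$ and $\langle\nabla u,\nabla(\partial u/\partial\nu)\rangle$ all zero. Moreover, $H^f\equiv 0$ on $\partial\Omega$ by $f$-minimality of $\Sigma_1,\Sigma_2$, killing the last boundary term. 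In the interior, $|\bar\Delta_f u|^2=0$.

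Thus \eqref{Reilly} collapses to
\begin{align*}
0=\int_\Omega\bigl(Ric_f(\bar\nabla u,\bar\nabla u)+|\bar\nabla^2 u|^2\bigr)\,e^{-f}dv.
\end{align*}
Both integrands are nonnegative, and the strict positivity of $Ric_f$ forces $\bar\nabla u\equiv 0$, so $u$ is constant in $\Omega$, contradicting the Dirichlet data $0=u|_{\Sigma_1}\neq u|_{\Sigma_2}=1$. The connectedness statement follows at once: if a closed embedded $f$-minimal hypersurface $\Sigma$ were disconnected, any two components would be disjoint closed embedded $f$-minimal hypersurfaces, which is impossible by what we just proved.

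The main obstacle I anticipate is the topological setup of $\bar\Omega$: I need $\partial\Omega$ to coincide with the full $\Sigma_1\sqcup\Sigma_2$ as a smooth manifold boundary, which is subtle if some $\Sigma_i$ happens to be non-separating in $N$. I would handle this by the local-tubular-neighborhood/continuity argument indicated above and, if needed, by first passing to the orientation double cover so that each $\Sigma_i$ is two-sided. Once $\Omega$ is in place, the Reilly computation is essentially mechanical, and the positive Bakry--\'{E}mery Ricci hypothesis does the rest.
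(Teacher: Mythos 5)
Your proposal is correct and follows essentially the same route as the paper: assume the hypersurfaces are disjoint, take the compact domain $\Omega$ they bound, solve $\bar{\Delta}_f u=0$ with Dirichlet data $0$ and $1$, and feed $u$ into the Reilly formula \eqref{Reilly}, where $f$-minimality and the locally constant boundary values kill all boundary terms and positivity of $Ric_f$ forces $u$ to be constant, a contradiction. The extra care you take in constructing $\Omega$ (minimizing geodesic, local two-sidedness) only fills in a step the paper states without detail, so the two arguments are the same in substance.
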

\begin{proof}
The proof is motivated by Fraser-Li's paper \cite{F-Li}. Suppose $\Sigma_1$ and $\Sigma_2$ are disjoint. Let $\Omega$ be the domain bounded by $\Sigma_1$ and $\Sigma_2$, then $\Omega$ is a compact manifold with boundary $\Sigma_1\cup\Sigma_2$. Consider the following boundary value problem on $\Omega$:
\begin{align}\label{prb-1}
    \left\{\begin{array}{ll}
             \bar{\Delta}_fu=0, & \textrm{on}~\Omega \\
             u=0, & \textrm{on}~\Sigma_1 \\
             u=1, & \textrm{on}~\Sigma_2
           \end{array}\right.
\end{align}
Let $\hat{u}=u-\varphi$, where $\varphi\in C^{\infty}(\Omega)$ satisfying $\varphi=0$ on $\Sigma_1$ and $\varphi=1$ on $\Sigma_2$. Then the above problem is equivalent to the following
 \begin{align}\label{prb-2}
    \left\{\begin{array}{ll}
             \bar{\Delta}_f\hat{u}= \bar{\Delta}_f\varphi, & \textrm{on}~\Omega \\
             \hat{u}=0, & \textrm{on}~\Sigma_1\cup\Sigma_2
           \end{array}\right.
\end{align}
Since $\bar{\Delta}_f\varphi\in C^{\infty}(\Omega)$, the classical results for elliptic equations with homogeneous boundary value imply that \eqref{prb-2} has a solution $\hat{u}\in C^{\infty}(\Omega)$, and therefore $u=\hat{u}+\varphi\in C^{\infty}(\Omega)$ is a solution to \eqref{prb-1}.
Apply $u$ and $\Omega$ to the Reilly formula \eqref{Reilly}, we obtain
\begin{align}\label{eq-2}
    0\geq&~\int_{\Omega}Ric_f(\bar{\nabla}u,\bar{\nabla}u)e^{-f}dv.
\end{align}
The boundary terms for $\Sigma_1\cup\Sigma_2$ vanishes since $\Sigma_1$ and $\Sigma_2$ are $f$-minimal and $u$ is constant on $\Sigma_1$ and $\Sigma_2$. Since $Ric_f$ is positive, \eqref{eq-2} implies $u$ is constant on $\Omega$, which is a contradiction since $u=0$ on $\Sigma_1$ and $u=1$ on $\Sigma_2$.
\end{proof}

In \cite{Law}, Lawson proved that for a closed embedded minimal hypersurface $M$ in a closed manifold $N$ with positive Ricci curvature, if both $M$ and $N$ are orientable, then $N\setminus M$ consists of two components $\Omega_1$ and $\Omega_2$. The following lemma is a generalization of this result to the $f$-minimal case.
\begin{lem}\label{lem-2}
Let $(N^n,g,e^{-f}dv)$ be a closed metric measure space with positive $Ric_f$, and let $M$ be a closed embedded $f$-minimal hypersurface. If both $M$ and $N$ are orientable, then $N\setminus M$ consists of two components $\Omega_1$ and $\Omega_2$.
\end{lem}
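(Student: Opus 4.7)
The plan is to reduce the lemma to a non-existence statement very close to Lemma \ref{lem-1}. Since both $N$ and $M$ are orientable, the normal line bundle of $M$ in $N$ is orientable, and being a line bundle it is therefore trivial; so $M$ is two-sided in $N$. A tubular neighborhood $M\times(-\epsilon,\epsilon)$ of a two-sided closed embedded hypersurface splits into exactly two sides when $M$ is removed, so $N\setminus M$ can meet each side of this tube in only one component, giving at most two components overall. Hence it suffices to rule out the possibility that $N\setminus M$ is connected, i.e.\ that $M$ is non-separating.

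Assuming $M$ is non-separating, I would cut $N$ open along $M$ to produce a compact connected manifold $\widetilde N$ whose boundary is a disjoint union $M_1\sqcup M_2$ of two isometric copies of $M$, together with a natural projection $\pi:\widetilde N\to N$ that is a diffeomorphism on the interior and identifies $M_1$ with $M_2$. Pulling back the metric and the function $f$ by $\pi$ yields a smooth metric measure space on $\widetilde N$ whose Bakry-\'{E}mery Ricci curvature is still positive, and both $M_1$ and $M_2$ are (disjoint) $f$-minimal hypersurfaces in $\widetilde N$.

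At this point I would essentially replicate the proof of Lemma \ref{lem-1} on $\widetilde N$ in place of $\Omega$. Solve the Dirichlet problem
\[
\bar{\Delta}_f u=0\text{ on }\widetilde N,\qquad u=0\text{ on }M_1,\qquad u=1\text{ on }M_2,
\]
which admits a smooth solution exactly as in the derivation of \eqref{prb-1}--\eqref{prb-2}, and apply the Reilly formula \eqref{Reilly}. Since $u$ is constant on each boundary component, its tangential gradient vanishes on $\partial\widetilde N$, killing the terms $\Delta_f u\cdot\partial u/\partial\nu$, $\langle\nabla u,\nabla\partial u/\partial\nu\rangle$, and $h(\nabla u,\nabla u)$; and since $M_1,M_2$ are $f$-minimal, the surviving term $H^f(\partial u/\partial\nu)^2$ also drops. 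The interior term $|\bar{\Delta}_f u|^2$ vanishes by harmonicity, so
\[
0=\int_{\widetilde N}\bigl(Ric_f(\bar{\nabla}u,\bar{\nabla}u)+|\bar{\nabla}^2 u|^2\bigr)e^{-f}\,dv.
\]
Positivity of $Ric_f$ forces $\bar{\nabla}u\equiv 0$, and since $\widetilde N$ is connected $u$ is constant, contradicting the boundary values $0$ and $1$.

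No serious analytic obstacle appears; the only genuinely new ingredient beyond Lemma \ref{lem-1} is the topological step of cutting $N$ along a non-separating hypersurface to convert the situation into an honest manifold with two disjoint $f$-minimal boundary components, after which the Reilly/Frankel argument runs verbatim. One small point worth stating carefully is that $\widetilde N$, although it maps to $N$ with $M_1$ and $M_2$ identified, carries a genuine smooth structure up to its boundary, so that both the regularity theory needed to solve the Dirichlet problem and the Reilly formula apply with no modification.
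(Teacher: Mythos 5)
Your proof is correct and follows essentially the same route as the paper: the paper also cuts $N$ along $M$ (following Lawson) and rules out connectedness of the cut manifold by the very same Reilly-formula argument, phrased there as the observation that a compact connected metric measure space with positive $Ric_f$ and boundary of nonnegative $f$-mean curvature must have connected boundary. The only point you should make explicit is that your ``two sides of the tube, hence at most two components'' step tacitly uses that $M$ is connected, which is exactly what Lemma \ref{lem-1} supplies and what the paper invokes at the corresponding step.
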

\begin{proof}
First we observe that for a compact connected metric measure space $(\Omega,g,e^{-f}dv)$ with boundary $\pt\Omega$, if $Ric_f$ of $\Omega$ is positive and the $f$-mean curvature of the boundary $\pt\Omega$ is nonnegative, then $\pt\Omega$ is connected. This can be proved by a similar argument as lemma \ref{lem-1}: Suppose $\pt\Omega$ is not connected. Let $\Sigma$ be one of its components. Choose a $f$-harmonic function $u$ (i.e., $\bar{\Delta}_f u=0$ on $\Omega$) which is equal to $0$ on $\Sigma$ and is equal to one on $\pt\Omega\setminus \Sigma$. The existence of $u$ is by the classical results for elliptic equations as in the proof of lemma \ref{lem-1}. Then the Reilly formula \eqref{Reilly} implies that $u$ is a constant, which is a contradiction.

To prove lemma \ref{lem-2}, we follow the argument in \cite{Law}. Let $D=N\setminus M$. For any $p\in M$ we have a neighborhood $U$ and local coordinates $(x^1,\cdots,x^n)$ on $U$ such that $M\cap U$ corresponds to the hyperplane $x_1=0$. Then we get a local coordinates for the boundary points of $D^*=D\cup \pt D$ by first considering $x_1\geq 0$ and then $x_1\leq 0$. Note that $D^*$ has positive $Ric_f$ and the $f$-mean curvature of  the boundary is nonnegative since $M$ is $f$-minimal. If $D^*$ were connected, then the boundary of $D^*$ would be connected by the previous paragraph. However, since $M$ is orientable and connected by lemma \ref{lem-1}, we have that $\pt D$ has two components. If follows that $D$ has two components $D_+$ and $D_-$ and that $D^*$ is the disjoint union of $\bar{D}_+$ and $\bar{D}_-$. This completes the proof.
\end{proof}

We remark that although a $f$-minimal hypersurface $M$ can be characterized as a minimal hypersurface in $(N,\td{g})$, Lemma \ref{lem-2} cannot follows directly from the Lawson's result \cite[Theorem 2]{Law}, since we may not have a sign about the Ricci curvature of the conformal changed metric $\td{g}$.

In the following, we will give a lower bound of the first eigenvalue of the $f$-Laplacian on a $f$-minimal hypersurface in closed metric measure space with positive $Ric_f$. Let $M$ be a $f$-minimal hypersurface in $(N^n,g,e^{-f}dv)$. Denote $d{\mu}$ the volume form on $M$ with respect to the metric induced from $(N,g)$. The $f$-Laplacian $\Delta_f=\Delta-\nabla f\cdot\nabla$ is a self-adjoint operator on $M$ with respect to $e^{-f}d{\mu}$. The first eigenvalue $\lambda_1$ of $\Delta_f$ is the lowest nonzero real number which satisfies
\begin{align*}
    -\Delta_fu=\lambda_1 u
\end{align*}
with Dirichlet or Neumann boundary condition if the boundary of $M$ is not empty. By the variational characterization, when $M$ is closed (or for the Neumann problem when $M$ has boundary )we also have
\begin{align}\label{vari}
    \lambda_1=\inf\limits_{\int_Mue^{-f}d\mu=0}\f{\int_M|\nabla u|^2e^{-f}d{\mu}}{\int_Mu^2e^{-f}d{\mu}}.
\end{align}
For Dirichlet problem when $M$ has boundary, the infimum in \eqref{vari} is taken among all smooth functions which vanish on the boundary $\pt M$. Using the Reilly formula, Li Ma and Sheng-Hua Du (\cite[Theorem 3]{MD}) proved that for a closed embedded $f$-minimal hypersurface $M$ in a closed orientable metric measure space $(N^n,g,e^{-f}dv)$ with $Ric_f\geq \kappa>0$, if $M$ divides $N$ into two components, then the first eigenvalue of $f$-Laplacian on $M$ satisfies $\lambda_1\geq \kappa/2$, which generalized a result of Choi and Wang \cite{Choi-2}. Here, using the universal covering space argument and Lemma \ref{lem-1}, we show that Ma-Du's theorem holds under a weaker assumption.
\begin{thm}\label{thm-3}
Let $M$ be a closed embedded $f$-minimal hypersurface in a closed metric measure space $(N^n,g,e^{-f}dv)$ with $Ric_f\geq \kappa>0$. Then the first eigenvalue $\lambda_1$ of the $f$-Laplacian on $M$ satisfies $\lambda_1\geq \kappa/2$.
\end{thm}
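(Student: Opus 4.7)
The plan is to reduce to the orientable, separating case already treated by Ma-Du \cite{MD} via the universal cover, with Lemma \ref{lem-1} ensuring that the lifted hypersurface stays connected (its components would otherwise form disjoint closed embedded $f$-minimal hypersurfaces in the cover, violating the Frankel-type property).

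Let $\pi: \tilde N \to N$ denote the universal covering, equipped with the pulled-back metric $\tilde g = \pi^{*}g$ and weight $\tilde f = f\circ\pi$, so that $Ric_{\tilde f} \geq \kappa > 0$ on $\tilde N$ because $\pi$ is a local isometry. Since $N$ is closed, $f$ is bounded and hence so is $\tilde f$; Wei-Wylie's Myers-type theorem for the $\infty$-Bakry-\'Emery tensor with bounded weight \cite{Wei-Wylie} then yields that $\tilde N$ is compact. Hence $(\tilde N,\tilde g, e^{-\tilde f}d\tilde v)$ is itself a closed metric measure space with positive $Ric_{\tilde f}$, and Lemma \ref{lem-1} forces $\tilde M := \pi^{-1}(M)$ to be connected.

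Being simply connected, $\tilde N$ is orientable, and any closed, connected, embedded codimension-one submanifold of $\tilde N$ must separate: otherwise a transversal crossing would furnish a loop in $\tilde N$ with mod-$2$ intersection number one with the submanifold, contradicting the homotopy invariance of such intersection numbers together with $\pi_1(\tilde N)=0$. Thus $\tilde M$ separates $\tilde N$ into two components and, being separating in an orientable ambient, is itself two-sided and orientable. All hypotheses of Ma-Du's theorem are now met upstairs, giving $\lambda_1(\tilde M) \geq \kappa/2$ for the first eigenvalue of the $\tilde f$-Laplacian on $\tilde M$.

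To transfer this to $M$, take a first eigenfunction $u$ of $\Delta_f$ on $M$ with $\int_M u\, e^{-f} d\mu = 0$, and set $\tilde u := u\circ\pi|_{\tilde M}$. Since $\pi|_{\tilde M}$ is a finite local isometric covering, $\tilde u$ satisfies $-\Delta_{\tilde f}\tilde u = \lambda_1(M)\tilde u$, has vanishing weighted integral on $\tilde M$, and shares the same Rayleigh quotient as $u$, so the variational characterization \eqref{vari} yields $\lambda_1(\tilde M) \leq \lambda_1(M)$, whence $\lambda_1(M) \geq \kappa/2$. The main technical step is securing the compactness of $\tilde N$; if one prefers to avoid the Myers-type theorem, the same argument carries through with $\tilde N$ replaced by an at most fourfold cover obtained by composing the orientation double cover of $N$ with the double cover of that manifold classified by the mod-$2$ Poincar\'e dual of the lift of $M$.
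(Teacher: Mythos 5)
Your proposal is correct, and its skeleton is the same as the paper's: pass to the universal cover, check that Ma--Du's hypotheses hold upstairs, and pull back the first eigenfunction. Both arguments need compactness of the cover; the paper gets it from finiteness of $\pi_1(N)$ under positive $Ric_f$ (citing \cite{FR,XMLi,Wei-Wylie}), while you invoke the Myers-type theorem of \cite{Wei-Wylie} with bounded weight, which applies since $f$ is bounded on the closed manifold $N$ -- either way the covering is finite. Where you genuinely diverge is the separation step: the paper appeals to its Lemma \ref{lem-2}, a Lawson-type statement proved with the Reilly formula, to conclude that the lift $\tilde{M}$ divides $\tilde{N}$ into two components, whereas you obtain connectedness of $\tilde{M}=\pi^{-1}(M)$ from Lemma \ref{lem-1} alone and then deduce separation from the mod-$2$ intersection-number argument available because $\tilde{N}$ is simply connected, which simultaneously gives two-sidedness and orientability of $\tilde{M}$. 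Your route is more elementary at this point, makes Lemma \ref{lem-2} unnecessary for Theorem \ref{thm-3}, and in fact supplies a justification for the paper's brief assertion that the embedded lift in the simply connected cover is orientable and separating; the trade-off is that the paper's Lemma \ref{lem-2} is a more general statement (any orientable closed ambient with positive $Ric_f$), not restricted to simply connected covers. The eigenvalue transfer is identical in substance: since $\pi|_{\tilde{M}}$ is a finite covering of constant degree, the lifted eigenfunction keeps the weighted mean-zero condition and the Rayleigh quotient, so \eqref{vari} gives $\lambda_1(M)\geq\lambda_1(\tilde{M})\geq\kappa/2$. The closing aside about a fourfold cover is dispensable and only sketched, but nothing in the main argument depends on it.
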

\begin{proof}
Let $\td{N}$ be the universal cover of $N$. Then $\td{N}$ satisfies the same curvature assumption as $N$. Since compact manifold with positive $Ric_f$ has finite fundamental group $\pi_1(N)$ (see eg.\cite{FR,XMLi,Wei-Wylie}), $\td{N}$ is compact and $\pi:\td{N}\ra N$ is a finite covering. Let $\td{M}$ be the lifting of $M$. Since $\td{M}$ is embedded and $\td{N}$ is simply connected, both $\td{N}$ and $\td{M}$ are orientable and then $\td{M}$ divides $\td{N}$ into two components by lemma \ref{lem-2}. By Theorem 3 in \cite{MD}, $\lambda_1(\td{M})\geq \kappa/2$. But the pullback of the first eigenfunction of $M$ into $\td{M}$ is again an eigenfunction of $\td{M}$. Therefore $\lambda_1(M)\geq \lambda_1(\td{M})\geq \kappa/2$.
\end{proof}

By combining Theorem \ref{thm-3} with the classical Yang-Yau's result (see \cite{YY,Schoen-Yau}), we get the following volume estimates.
\begin{corr}\label{cor-5}
Let $M$ be a closed embedded $f$-minimal surface of genus $g$ in a closed metric measure space $(N^3,g,e^{-f}dv)$ with $Ric_f\geq \kappa>0$. Then
\begin{align}\label{eq-1}
    \int_Md\td{\mu}\leq&\f{16\pi}{\kappa}(1+g),
\end{align}
where $d\td{\mu}=e^{-f}d\mu$ is the volume form on $M$ with respect to the induced metric from $(N,\td{g})$.
\end{corr}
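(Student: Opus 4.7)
The plan is a Hersch--Yang--Yau test-function argument with Theorem~\ref{thm-3} supplying the Lichnerowicz-type lower bound on $\lambda_1$. By the uniformization theorem, the closed orientable genus-$g$ surface $M$ admits a (branched) holomorphic covering $\phi=(\phi_1,\phi_2,\phi_3):M\to S^2\subset\R^3$ of degree $d\le g+1$. I first apply Hersch's balancing lemma to the probability measure $\bigl(\int_M e^{-f}d\mu\bigr)^{-1}e^{-f}d\mu$: since the Möbius group acts transitively on the space of centers of mass of $S^2$, a fixed-point/intermediate-value argument produces an automorphism of $S^2$ after which each coordinate satisfies
\[
\int_M \phi_i\,e^{-f}d\mu=0,\qquad i=1,2,3,
\]
making the $\phi_i$ admissible in the variational characterisation \eqref{vari} of the first eigenvalue $\lambda_1$ of the $f$-Laplacian on $M$.

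Testing each $\phi_i$ against \eqref{vari}, using $\lambda_1\ge\kappa/2$ from Theorem~\ref{thm-3}, summing over $i$, and using $\sum_i\phi_i^2\equiv 1$ on $S^2$, I obtain
\[
\frac{\kappa}{2}\int_M e^{-f}d\mu \;\le\; \int_M |d\phi|_g^2\,e^{-f}d\mu.
\]
The right-hand side is then to be controlled by $8\pi(1+g)$ using the conformal invariance of the two-dimensional Dirichlet integral in the conformal class of $g|_M=\tilde g|_M\cdot e^{f}$. Concretely, the relations $|d\phi|_{\tilde g}^2=e^f|d\phi|_g^2$ and $d\tilde\mu=e^{-f}d\mu$ identify $|d\phi|_g^2\,e^{-f}d\mu$ with the conformally invariant form $|d\phi|_{\tilde g}^2\,d\tilde\mu$, which, by branched conformality of $\phi$ as a map $(M,\tilde g|_M)\to (S^2,g_{S^2})$, equals $2\phi^*d\mu_{S^2}$. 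Integrating yields
\[
\int_M |d\phi|_g^2\,e^{-f}d\mu \;\le\; 2\deg(\phi)\cdot 4\pi \;\le\; 8\pi(1+g),
\]
and combining with the previous display gives $\int_M e^{-f}d\mu\le 16\pi(1+g)/\kappa$, which is \eqref{eq-1}.

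The main point requiring care is this final matching step, where the $f$-weighted Dirichlet integrand arising from the Rayleigh quotient for $\lambda_1(\Delta_f)$ must be identified against the classical conformally invariant Dirichlet form on $(M,\tilde g|_M)$. This is exactly the reason the conformal change $\tilde g=e^{-2f/(n-1)}g$ is tailored to $n=3$: in dimension two the conformal factor on the cotangent norm $|d\phi|_{\tilde g}^2$ and the factor on the area form $d\tilde\mu$ combine to absorb the weight $e^{-f}$, so that the $f$-weighted estimate reduces to a topological bound on the degree of the branched covering $\phi$ supplied by the uniformization theorem.
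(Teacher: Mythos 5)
Your overall strategy (Hersch balancing with respect to the measure $e^{-f}d\mu$, testing the coordinates of a branched conformal map $\phi:M\to S^2$ of degree at most $1+g$ in \eqref{vari}, and invoking Theorem \ref{thm-3}) is sound up to the display
\[
\frac{\kappa}{2}\int_M e^{-f}d\mu\;\le\;\int_M|d\phi|_g^2\,e^{-f}d\mu,
\]
but your final step contains a genuine error. With $\tilde g|_M=e^{-f}g|_M$ you correctly record $|d\phi|_{\tilde g}^2=e^{f}|d\phi|_g^2$ and $d\tilde\mu=e^{-f}d\mu$, but these two relations give
\[
|d\phi|_{\tilde g}^2\,d\tilde\mu=|d\phi|_g^2\,d\mu,
\]
not $|d\phi|_g^2\,e^{-f}d\mu$: the two conformal factors cancel each other, so there is nothing left over to ``absorb'' the extra weight $e^{-f}$ coming from the Rayleigh quotient of $\Delta_f$. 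Hence the quantity you must bound, $\int_M|d\phi|_g^2e^{-f}d\mu$, is \emph{not} the conformally invariant Dirichlet energy, it does not equal $2\deg(\phi)\cdot 4\pi$, and on regions where $f<0$ it can exceed $8\pi\deg(\phi)$ by an arbitrarily large factor; so the asserted inequality $\int_M|d\phi|_g^2e^{-f}d\mu\le 8\pi(1+g)$ does not follow as written. (The precise point you flag as ``requiring care'' is exactly where the argument breaks.)

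The gap is easy to close, and the repair is the same normalization the paper uses: since $N$ is closed, add a constant so that $f\ge 0$ (this changes neither $Ric_f$, nor $f$-minimality, nor $\lambda_1(\Delta_f)$, and is the convention under which \eqref{eq-1} is established in the paper). Then $e^{-f}\le 1$ yields
\[
\int_M|d\phi|_g^2e^{-f}d\mu\;\le\;\int_M|d\phi|_g^2\,d\mu\;=\;\int_M|d\phi|_{\tilde g}^2\,d\tilde\mu\;=\;8\pi\deg(\phi)\;\le\;8\pi(1+g),
\]
and your argument then gives \eqref{eq-1}. With this fix your route amounts to re-running the Hersch--Yang--Yau construction inside the weighted Rayleigh quotient, whereas the paper compares $\lambda_1(\Delta_f)$ with the first eigenvalue $\tilde\lambda_1$ of the ordinary Laplacian of $(M,\tilde g|_M)$ via $\tilde\lambda_1\ge e^{\min_N f}\lambda_1\ge\kappa/2$ (after the same normalization) and then quotes the Yang--Yau inequality $\tilde\lambda_1\int_M d\tilde\mu\le 8\pi(1+g)$ as a black box; both repairs use the sign of $f$ in exactly the same place, namely to compare the weighted energy with the unweighted, conformally invariant one. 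A minor additional remark: the existence of $\phi$ with $\deg(\phi)\le 1+g$ comes from Riemann--Roch (as in Yang--Yau's paper) rather than from uniformization, but that is standard and harmless.
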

\begin{proof}
Since $N^3$ is closed and $f\in C^{\infty}(N)$, by possibly adding a constant, we may assume that $f$ is nonnegative. Let $\td{g}=e^{-f}g$, and denote $\td{\nabla},\td{\Delta}$ the gradient and Laplacian on $M$ with respect to the induced metric from $(N^3,\td{g})$. Then the  first eigenvalue $\td{\lambda}_1$ of the Laplacian $\td{\Delta}$ satisfies
\begin{align*}
    \td{\lambda}_1=&\inf\limits_{\int_Mud\td{\mu}=0}\f{\int_M|\td{\nabla}u|^2d\td{\mu}}{\int_Mu^2d\td{\mu}}\\
    \geq&e^{\min_Nf}\inf\limits_{\int_Mue^{-f}d{\mu}=0}\f{\int_M|\nabla u|^2e^{-f}d{\mu}}{\int_Mu^2e^{-f}d{\mu}}\\
    =&\lambda_1e^{\min_Nf}\geq \kappa/2.
\end{align*}
Here we used the relation $|\td{\nabla}u|^2=e^{f}|\nabla u|^2$ in the first inequality; the second equality is due to the variational characterization \eqref{vari} for the first eigenvalue of the $f$-Laplacian; the second inequality is due to Theorem \ref{thm-3} and that $f$ is non-negative. Then from the classical Yang-Yau's inequality
\begin{align*}
    \td{\lambda}_1\int_Md\td{\mu}\leq 8\pi(1+g),
\end{align*}
we get the inequality \eqref{eq-1}.
\end{proof}

From the Gauss equation and the minimality of $M^2$ in $(N^3,\td{g})$, we have
\begin{align}\label{gauss}
    \f 12\|h\|^2=K^N-K^M,
\end{align}
where $\|h\|^2$ is the squared norm of the second fundamental form of $M$ in $(N^3,\td{g})$. $K^N$ and $K^M$ are sectional curvature of $N$ and Gauss curvature of $M$, with respect to $\td{g}$ and the induced metric from $\td{g}$ respectively. Integrating \eqref{gauss} over $M$ with respect to $d\td{\mu}$ and applying the Gauss-Bonnet theorem, we get
\begin{align}
    \int_M\|h\|^2d\td{\mu}=&2\int_MK^Nd\td{\mu}-2\int_MK^Md\td{\mu}\leq C\int_Md\td{\mu}-4\pi\chi(M)\nonumber\\
                     \leq&C\f{16\pi}{\kappa}(1+g)+8\pi(g-1)\label{bd-h}
\end{align}

We will use the next proposition, which shows us how to use the uniform bounds \eqref{eq-1} and \eqref{bd-h} to obtain a singular compactness result (see \cite[Proposition 7.14]{CM2011}, and \cite{Choi-1,CM09})
\begin{prop}\label{singu}
Let $N^3$ be a closed Riemannian three-manifold and $M_i\subset N$ a sequence of closed embedded minimal surfaces of genus $g$ with
\begin{align*}
    \textrm{Aera}(M_i)\leq C_1,\qquad \textrm{and}\quad \int_{M_i}\|h^{M_i}\|^2d\mu_i\leq C_2,
\end{align*}
where $\textrm{Aera}(M_i)$ is the volume of $M_i$ with respect to the induced metric from $N$, and $h^{M_i}$ is the second fundamental form of $M_i\subset N$, $C_1,C_2$ are two constants independent of $i$. Then there exists a finite set of points $\mathcal{S}\subset N$ and a subsequence $M_{i'}$ that converges uniformly in $C^k$ (any $k\geq 2$) topology on compact subsets of $N\setminus \mathcal{S}$ to a smooth embedded minimal surface (possibly with multiplicity) $M\subset N$.
\end{prop}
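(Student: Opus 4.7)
The plan is to follow the classical Choi--Schoen blow-up analysis: produce the singular set $\mathcal{S}$ as the concentration set of the $L^2$-curvature measure, then use $\varepsilon$-regularity to extract a smooth subsequential limit away from $\mathcal{S}$. The central analytic input is the following $\varepsilon$-regularity theorem for embedded minimal surfaces in a three-manifold, which I would quote (or re-prove via Simons' inequality and Moser iteration): there exist constants $\varepsilon_0>0$ and $r_0>0$, depending only on $N$, such that whenever $M\subset N$ is an embedded minimal surface, $p\in M$, $0<r\leq r_0$, and
\begin{align*}
\int_{B_r(p)\cap M}\|h^M\|^2\,d\mu\leq \varepsilon_0,
\end{align*}
one has $\sup_{B_{r/2}(p)\cap M}\|h^M\|^2\leq C\,r^{-2}$ for a universal $C$. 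The proof of this is where Simons' identity $\Delta\|h\|^2\geq -C(1+\|h\|^2)\|h\|^2$ on a minimal surface, together with Moser iteration, does the real work.

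Granting $\varepsilon$-regularity, the first step is to define the singular set
\begin{align*}
\mathcal{S}=\bigl\{p\in N:\limsup_{i\to\infty}\int_{B_r(p)\cap M_i}\|h^{M_i}\|^2\,d\mu_i\geq \tfrac{\varepsilon_0}{2}\text{ for every }r>0\bigr\}.
\end{align*}
A standard covering argument using the uniform bound $\int_{M_i}\|h^{M_i}\|^2\,d\mu_i\leq C_2$ shows that $\#\mathcal{S}\leq 2C_2/\varepsilon_0$; in particular $\mathcal{S}$ is finite.

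Next, for any compact set $K\subset N\setminus\mathcal{S}$, cover $K$ by finitely many balls $B_{r}(p)$ with $r<r_0$ small enough (and $p\in K$) that $\int_{B_{2r}(p)\cap M_i}\|h^{M_i}\|^2<\varepsilon_0$ for all sufficiently large $i$; this is possible precisely because no $p\in K$ lies in $\mathcal{S}$. The $\varepsilon$-regularity theorem then gives a uniform bound $\sup_{M_i\cap K}\|h^{M_i}\|\leq C(K)$ for large $i$. Combining this with the uniform area bound and the monotonicity formula for minimal surfaces, the $M_i\cap K$ have uniformly bounded local graphical representations over their tangent planes. An Arzel\`a--Ascoli argument produces a subsequence converging in $C^{1,\alpha}$ to a limit $M$; elliptic regularity applied to the minimal surface equation then bootstraps this to $C^k$ convergence for every $k\geq 2$ on compact subsets of $N\setminus\mathcal{S}$. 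Since multiple sheets of $M_i$ can collapse onto a single sheet of $M$, the limit is a smooth embedded minimal surface possibly with multiplicity.

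The main obstacle is the $\varepsilon$-regularity step — everything else is a covering argument and elliptic bootstrapping — so in practice one quotes the Choi--Schoen $\varepsilon$-regularity theorem. A minor subtlety is to check that the total curvature concentrating at a point of $\mathcal{S}$ really does force the local $L^2$-norm of $h^{M_i}$ to exceed $\varepsilon_0$ at all small scales, which is immediate from the definition of $\mathcal{S}$ via a diagonal argument along the sequence.
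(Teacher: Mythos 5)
The paper does not prove this proposition at all: it is quoted from the literature (Proposition 7.14 of Colding--Minicozzi's book, going back to Choi--Schoen), and your sketch reproduces exactly the standard argument used there -- define the concentration set via the Choi--Schoen $\varepsilon$-regularity threshold, bound its cardinality by $C_2/\varepsilon_0$ through a covering argument, get uniform curvature bounds on compact subsets of $N\setminus\mathcal{S}$, pass to local graphs, and bootstrap with Arzel\`a--Ascoli and elliptic regularity. Up to that point your outline is correct, with the usual caveat that quoting the $\varepsilon$-regularity theorem is where the analytic content lives.

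There is, however, a genuine gap between what you prove and what the statement asserts. The proposition claims the limit is a smooth embedded minimal surface $M\subset N$, and the paper later relies on this (it treats $M$ as a closed $f$-minimal surface in $N$ before running the multiplicity-one and Allard argument). Your construction only produces a smooth limit in $N\setminus\mathcal{S}$; you must still show that $M$ extends smoothly, and remains embedded, across the finitely many points of $\mathcal{S}$. The standard way to close this is: by lower semicontinuity the limit has finite total curvature ($\int_M\|h\|^2\leq C_2$) and finite area, so for $p\in\mathcal{S}$ the quantity $\int_{B_r(p)\cap M}\|h\|^2$ tends to $0$ as $r\to 0$; applying the same $\varepsilon$-regularity to the fixed surface $M$ on small punctured balls gives a curvature bound of the form $\|h\|(x)\leq \delta/d(x,p)$ with $\delta$ small, and then a removable-singularity theorem for minimal surfaces (equivalently, an Allard-type argument, since the density at $p$ is finite) shows the puncture is removable and the extension is smooth. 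Embeddedness of the limit also needs a word: the $C^2_{loc}$ limit of embedded surfaces is a priori only immersed with tangential sheets, and one invokes the strong maximum principle to conclude that touching sheets coincide, which is exactly what produces the ``possibly with multiplicity'' in the conclusion. Without these two steps the proposition as stated -- and hence the way the paper uses it -- is not yet established.
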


Now we are in a position to complete the proof of Theorem \ref{main-thm}. Let $M_i\subset N$ be any sequence of closed embedded $f$-minimal surfaces of fixed genus, which is also a sequence of closed embedded minimal surface of fixed genus in $(N^n,\td{g})$. From \eqref{eq-1} and \eqref{bd-h}, we have uniform area and total curvature bounds of $M_i\subset (N^n,\td{g})$ in terms of the genus. Then Proposition \ref{singu} imply that $M_i$ have a subsequence $M_{i'}$ which converges away from finitely many points to a smooth embedded minimal surface $M$ in $(N^n,\td{g})$. Note that $M$ is $f$-minimal in $(N^n,g)$. It remains to show that the convergence holds across these points, i.e, the convergence is smooth everywhere. By Allard's regularity theorem \cite{A1}, this follows from showing that the convergence is of multiplicity one. Note that since $Ric_f>0$ on $N$, $N$ has finite fundamental group $\pi_1(M)$, after passing a finite cover, we may assume $N$ is simply connected. From the proof of Corollary \ref{cor-5}, the first eigenvalue $\td{\lambda}_1$ of $\td{\Delta}$ of the subsequence $M_{i'}$ has a positive lower bound $\kappa/2$. If the convergence is not multiplicity one, for large $i'$, we can construct a test function to show that $\td{\lambda}_1(M_{i'})$ tends to zero, which violates the lower bound of $\td{\lambda}_1$. The detail argument is just the same as Choi-Schoen's paper \cite{Choi-1} (see also \cite{CM2011}), which we omit here.

\section{First eigenvalue of $f$-Laplacian on manifold with positive $m$-Bakry-\'{E}mery Ricci curvature}

In this section, we will use the Reilly formula \eqref{Reilly-2} to prove Theorem  \ref{lichn}.  Let $\bar{\Delta}_fu=-\lambda_1u$, i.e., $u$ is the first eigenfunction of $f$-Laplacian. When $m>n$, from \eqref{Reilly-2} and the boundary condition of $\pt N$, we have
\begin{align}
    \f {m-1}m\lambda_1^2\int_Nu^2e^{-f}dv\geq&\int_{N}Ric_f^m(\bar{\nabla}u,\bar{\nabla}u)e^{-f}dv\label{eq-eigen}\\
    \geq&(m-1)K\int_N|\bar{\nabla}u|^2e^{-f}dv.\nonumber
\end{align}
Dividing by $\int_Nu^2e^{-f}dv$ and using the fact that $\lambda_1=\int_N|\bar{\nabla}u|^2e^{-f}dv/\int_Nu^2e^{-f}dv$ implies
\begin{align*}
    \lambda_1\geq&~mK.
\end{align*}

When $m=n$, since $f$ is constant and $Ric_f^m=Ric$, the inequality \eqref{lichn-2} is due to the classic results by Lichnerowicz \cite{Lich}, Reilly \cite{Reil} and Escobar \cite{Esco}.

Next, we consider the rigidity when the equality holds in \eqref{lichn-2}. When $m>n$, we show that the inequality \eqref{lichn-2} cannot assume equality. Since if $\lambda_1=mK$, then \eqref{eq-eigen} becomes equality and then the Reilly formula \eqref{Reilly-2} also attains equality. Since the algebraic inequality $(a+b)^2\geq \f{a^2}{z}-\f{b^2}{z-1}$ assumes the equality if and only if $(z-1)a+zb=0$ (for $z>1$). We have that
\begin{align}
0=&\bar{\Delta}_fu+\f m{m-n}\bar{\nabla}f\cdot\bar{\nabla}u=\bar{\Delta}u+\f n{m-n}\bar{\nabla}f\cdot\bar{\nabla}u\label{appen-1}
\end{align}
holds everywhere on $N$. Multiplying \eqref{appen-1} with $u$ and integrating on $N$ with respect to $e^{\f n{m-n}f}dv$ give that
\begin{align*}
    0=&\int_{N}u\left(\bar{\Delta}u+\f n{m-n}\bar{\nabla}f\cdot\bar{\nabla}u\right)e^{\f n{m-n}f}dv\\
    =&-\int_N|\bar{\nabla}u|^2e^{\f n{m-n}f}dv+\int_{\pt N}u\f{\pt u}{\pt\nu}e^{\f n{m-n}f}d\mu\\
    =&-\int_N|\bar{\nabla}u|^2e^{\f n{m-n}f}dv,
\end{align*}
where the third equality is due to the boundary condition of $u$. Therefore we have that $u$ is a constant function on $N$, which is a contradiction since $u$ is the first eigenfunction of $f$-Laplacian and cannot be a constant.

So we conclude that the equality holds in \eqref{lichn-2} only when $m=n$, $f$ is constant and $Ric_f^m=Ric$. Then
by Obata \cite{Oba}, Reilly \cite{Reil} and Escobar \cite{Esco}, we complete the proof of theorem \ref{lichn}.

\section{Manifolds with nonnegative $Ric_f^m$ and $f$-mean convex boundary}

In this section, we modify  the argument in \cite{Martin} to give the proof of Theorem \ref{thm-4}. For any point $x\in N$, since $N$ is complete, there exists a geodesic $\gamma:[0,d]\ra N$ parametrized by arc length with $\gamma(0)=x$, $\gamma(d)\in \pt N$ and $d=d(x,\pt N)$. We need to prove $d\leq 1/K$. Choose  an orthonormal basis $e_1,\cdots e_{n-1}$ for $T_{\gamma(d)}\pt N$ and let $e_i(s)$ be the parallel transport of $e_i$ along $\gamma$. Let $V_i(s)=\varphi(s)e_i(s)$ with $\varphi(0)=0$ and $\varphi(d)=1$.  From the first variation formula, we have that for each $1\leq i\leq n-1$
\begin{align*}
    0=\delta\gamma(V_i)=&\langle \gamma'(d),e_i\rangle-\int_0^d\varphi(s)\langle{\gamma}''(s),e_i(s)\rangle ds=\langle \gamma'(d),e_i\rangle,
\end{align*}
which implies that $\gamma'(d)$ is orthogonal to $\pt N$ at $\gamma(d)$. The second variation formula gives that
\begin{align*}
    \sum_{i=1}^{n-1}\delta^2\gamma(V_i,V_i)=&\int_0^d\left((n-1)\varphi'(s)^2-\varphi(s)^2Ric(\gamma'(s),\gamma'(s))\right)ds-H(\gamma(d))\geq 0
\end{align*}
By the definition of $m$-Bakry-\'{E}mery Ricci curvature, we have
\begin{align*}
    0\leq&\int_0^d\left((n-1)\varphi'(s)^2-\varphi(s)^2Ric_f^m(\gamma'(s),\gamma'(s))\right)ds-H(\gamma(d))\\
    &\qquad+\int_0^d\varphi(s)^2\left(\bar{\nabla}^2f(\gamma'(s),\gamma'(s))-\f 1{m-n}\langle\bar{\nabla}f(\gamma(s)),\gamma'(s)\rangle^2\right)ds
\end{align*}
Since $Ric_f^m$ is nonnegative, by using the facts that
\begin{align*}
    \f{d}{ds}f(\gamma(s))=\langle\bar{\nabla}f(\gamma(s)),\gamma'(s)\rangle
\end{align*}
and that
\begin{align*}
    \f{d^2}{ds^2}f(\gamma(s))=&\bar{\nabla}^2f(\gamma'(s),\gamma'(s)),
\end{align*}
and by integration by parts, we deduce that
\begin{align*}
    0\leq&\int_0^d\biggl((n-1)\varphi'(s)^2-2\varphi(s)\varphi'(s)\langle\bar{\nabla}f(\gamma(s)),\gamma'(s)\rangle\\
    &\qquad -\f 1{m-n}\varphi(s)^2\langle\bar{\nabla}f(\gamma(s)),\gamma'(s)\rangle^2\biggr)ds-H^f(\gamma(d)).
\end{align*}
Using Cauchy-Schwartz inequality we get that
\begin{align}\label{eq4-1}
    0\leq&\int_0^d(m-1)\varphi'(s)^2ds-H^f(\gamma(d)).
\end{align}
Choose $\varphi(s)=\f sd$ and note that $H^f\geq (m-1)K$ on $\pt N$, from \eqref{eq4-1} we have that $d\leq 1/K$. Since the point $x$ is arbitrary, we have proved the inequality \eqref{dist}.

Now we assume that $\pt N$ is compact, then \eqref{dist} implies that $N$ is also compact. By a similar argument in the proof of Lemma \ref{lem-1}, we can prove that $\pt N$ is connected: Suppose not, let $\Sigma$ be one of its components. Choose a $f$-harmonic function $u$ on $N$, which is equal to zero on $\Sigma$ and is equal to one on $\pt N\setminus \Sigma$. Then since $Ric_f^m\geq 0$ on $N$ and $H^f\geq (m-1)K>0$ on $\pt N$, the Reilly formula \eqref{Reilly-2} implies that $\f{\pt u}{\pt \nu}=0$ on $\pt N$, where $\nu$ is the outer unit normal to $\pt N$. By integration by parts, we have that
\begin{align*}
    \int_N|\bar{\nabla}u|^2e^{-f}dv=&-\int_Nu\bar{\Delta}_fue^{-f}dv+\int_{\pt N}u\f{\pt u}{\pt \nu}e^{-f}d\mu=0.
\end{align*}
Therefore $u$ is a constant function on $N$, which is a contradiction since $u=0$ on $\Sigma$ but $u=1$ on $\pt N\setminus\Sigma$.

Suppose the equality  holds in \eqref{dist}, we will show that $N$ is isometric to an $n$-dimensional Euclidean ball. By rescaling the metric of $N$, we may assume $K=1$. Since $M$ is compact, there exists some point $x_0$ in the interior of $N$ such that $d(x_0,\pt N)=1$. It is clear that the geodesic ball $B_1(x_0)$ of radius $1$ centered at $x_0$ is contained in $N$. We claim that $N$ is just the geodesic ball $B_1(x_0)$. In fact, let $\rho=d(x_0,\cdot)$ be the distance function from $x_0$. Since the $m$-Bakry-\'{E}mery Ricci curvature of $N$ is nonnegative, the $f$-Laplacian of $\rho$ satisfies (see equation (4) in \cite{Qian})
\begin{align}\label{f-lap-rho}
    \bar{\Delta}_f(\rho)\leq&\f{m-1}{\rho},
\end{align}
in the sense of distribution. Let $\Sigma=\{q\in\pt N:\rho(q)=1\}$, which is clearly a closed set in $\pt N$ by the continuity of $\rho$. Since $\pt N$ is connected, to show $\Sigma=\pt N$, it suffices to show that $\Sigma$ is also open in $\pt N$, that is for any $q\in \Sigma$, there is an open neighborhood $U$ of $q$ in $\pt N$ such that $\rho\equiv 1$ on $U$. If $q$ is not a conjugate point to $x_0$ in $N$, then the geodesic sphere $\pt B_1(x_0)$ is a smooth hypersurface near $q$ in $N$. Since $\bar{\Delta}\rho$ and $\bar{\nabla}\rho$ are the mean curvature and outer unit normal of the geodesic sphere, we have $\bar{\Delta}_f(\rho)=H^f$. Note that $\rho=1$ on the geodesic sphere and by the $f$-Laplacian comparison inequality \eqref{f-lap-rho}, the $f$-mean curvature of the geodesic sphere is at most $m-1$. However, by the assumption of Theorem \ref{thm-4}, the $f$-mean curvature of $\pt N$ is at least $m-1$. Then from the maximum principle (see \cite{Esch}), we have that $\pt N$ and $\pt B_1(x_0)$ coincides in a neighborhood of $q$.  This implies that $\Sigma$ is open near any $q$ which is not a conjugate point. A similar process in \cite{Martin} (see also Calabi \cite{Cal}) makes us to work through the argument to conclude that $\rho$ is constant near $q$ in $\pt N$, when $q$ is a conjugate point of $x_0$. This proves that $\pt N$ is just the geodesic sphere $\pt B_1(x_0)$ and $M$ is the geodesic ball $B_1(x_0)$.

Next we show that $N$ is isometric to the Euclidean ball of radius one. Since any $q\in \pt N$ can be joined by a minimizing geodesic $\gamma$ parameterized by arc-length from $x_0$ to $q$, and $\gamma$ is orthogonal to $\pt N=\pt B_1(x_0)$ at $q$, which implies that $\gamma$ is uniquely determined by $q$ and $q$ is not in the cut locus of $x_0$. So that $\rho=d(x_0,\cdot)$ is smooth up to the boundary $\pt N$.Then the $f$-Laplacian comparison inequality \eqref{f-lap-rho} holds in the classical sense. Let $|N|=\int_Ne^{-f}dv$ and $|\pt N|=\int_{\pt N}e^{-f}d\mu$ be volumes of $N$ and $\pt N$ with respect to the weighted measure. From the facts that $|\bar{\nabla}\rho|=1$ on $N$, $\rho= 1$ and $\f{\pt\rho}{\pt\nu}=1$ on $\pt N$, the integration by part implies that
\begin{align*}
    |\pt N|-|N|=&\int_{\pt N}\rho\f{\pt\rho}{\pt\nu}e^{-f}d\mu-\int_N|\bar{\nabla}\rho|^2e^{-f}dv\\
    =&\int_{N}\rho\bar{\Delta}_f(\rho)e^{-f}dv~\leq~ (m-1)|N|.
\end{align*}
This implies $|\pt N|\leq m|N|$.

On the other hand, by a similar argument in the proof of Theorem 1 in \cite{Ros}, we can prove that $|\pt N|\geq m|N|$: Let $u$ be a smooth solution of the following Dirichlet problem
\begin{align*}
    \left\{\begin{array}{ll}
             \bar{\Delta}_fu=1, & \textrm{in } N, \\
             u=0, & \textrm{on } \pt N.
           \end{array}\right.
\end{align*}
Integration by part gives that
\begin{align}\label{eq4-2}
    |N|=\int_N\bar{\Delta}_fue^{-f}dv=\int_{\pt N}\f{\pt u}{\pt\nu}e^{-f}d\mu.
\end{align}
Since $Ric_f^m\geq 0$ in $N$ and $H^f\geq (m-1)$ on $\pt N$ (note that we have assumed $K=1$), substituting $u$ to Reilly formula \eqref{Reilly-2} gives that
\begin{align}\label{eq4-3}
    |N|\geq m\int_{\pt N}(\f{\pt u}{\pt \nu})^2e^{-f}d\mu.
\end{align}
From \eqref{eq4-2}, H\"{o}lder inequality and \eqref{eq4-3}, it follows that
\begin{align*}
    |N|^2=&\left(\int_{\pt N}\f{\pt u}{\pt\nu}e^{-f}d\mu\right)^2\\
    \leq&|\pt N|\int_{\pt N}(\f{\pt u}{\pt\nu})^2e^{-f}d\mu\\
    \leq&|\pt N||N|/m
\end{align*}
and we have $|\pt N|\geq m|N|$.

Therefore we get the equality $|\pt N|=m|N|$. Then the equality holds in \eqref{eq4-3} and therefore the Reilly formula \eqref{Reilly-2} assumes equality too. By a similar argument as in the last part in section 4, we get $m=n$, $f$ is constant and $Ric_f^m=Ric$. Then Theorem 1 in \cite{Ros} implies that $N$ is isometric to an Euclidean ball. This completes the proof of Theorem \ref{thm-4}.


\bibliographystyle{Plain}

\end{document}